\newtheorem{theorem}{Theorem}
\newtheorem{lemma}[theorem]{Lemma}
\newtheorem{corollary}[theorem]{Corollary}
\theoremstyle{definition}
\newtheorem{definition}[theorem]{Definition}
\begin{document}

\title[Weitzenb\"ock derivations of polynomial algebras]
{The Conjecture of Nowicki on Weitzenb\"ock derivations of polynomial algebras}

\author[Vesselin Drensky and Leonid Makar-Limanov]
{Vesselin Drensky and Leonid Makar-Limanov}
\address{Institute of Mathematics and Informatics,
Bulgarian Academy of Sciences,
1113 Sofia, Bulgaria}
\email{drensky@math.bas.bg}
\address{Department of Mathematics,
Wayne State University
Detroit, MI 48202, USA.}
\email{lml@math.wayne.edu}

\thanks
{The research of the first author was partially supported by Grant
MI-1503/2005 of the Bulgarian National Science Fund.}

\thanks
{The work of the second author was partially supported by an NSA grant.}

\subjclass[2000]
{13N15; 13A50; 13P10; 14E07.}

\begin{abstract} The Weitzenb\"ock theorem states that if $\Delta$
is a linear locally nilpotent derivation of
the polynomial algebra $K[Z]=K[z_1,\ldots,z_m]$ over a field $K$
of characteristic 0, then the algebra of constants of $\Delta$ is finitely generated.
If $m=2n$ and the Jordan normal form of $\Delta$ consists of $2\times 2$ Jordan cells only,
we may assume that $K[Z]=K[X,Y]$ and $\Delta(y_i)=x_i$, $\Delta(x_i)=0$, $i=1,\ldots,n$.
Nowicki conjectured that the algebra of constants $K[X,Y]^{\Delta}$
is generated by $x_1,\ldots,x_n$ and $x_iy_j-x_jy_i$, $1\leq i<j\leq n$.
Recently this conjecture was confirmed in the Ph.D. thesis of Khoury, and also by Derksen.
In this paper we give an elementary proof of the conjecture of Nowicki. Then we find a very simple system
of defining relations of the algebra $K[X,Y]^{\Delta}$ which corresponds to the reduced Gr\"obner basis
of the related ideal with respect to a suitable admissible order, and present an explicit basis of
$K[X,Y]^{\Delta}$ as a vector space.
\end{abstract}

\maketitle

\section*{Introduction}

Let $K$ be a field of characteristic 0 and let $K[Z]=K[z_1,\ldots,z_m]$ be the polynomial $K$-algebra
in $m$ variables. A linear operator $\Delta$ of $K[Z]$ is called a derivation if
$\Delta(uv)=\Delta(u)v+u\Delta(v)$ for all $u,v\in K[Z]$. The derivation $\Delta$ is locally nilpotent
if for each $u\in K[Z]$ there exists a $d\geq 1$ such that $\Delta^d(u)=0$.
Locally nilpotent derivations of polynomial algebras are subjects of active investigation.
They play essential role in the study of automorphism group of $K[Z]$,
including the generation of $\text{Aut }K[x,y]$ by
tame automorphisms, the Jacobian conjecture, in invariant theory, Fourteenth Hilbert's problem
and other important topics. See the books by Nowicki \cite{N}, van den Essen \cite{E},
and Freudenburg \cite{F} for details.

The well known theorem of Weitzenb\"ock \cite{W} states that
if $\Delta$ is a nilpotent linear operator acting on the $m$-dimensional vector space
$KZ=Kz_1\oplus \cdots\oplus Kz_m$ and we extend it to a derivation of $K[Z]$,
then the algebra $K[Z]^{\Delta}$ of constants of $\Delta$, i.e., the kernel of $\Delta$ in $K[Z]$,
is a finitely generated algebra. We call $\Delta$, which is a locally nilpotent derivation, a Weitzenb\"ock derivation.
A modern proof of the theorem of Weitzenb\"ock
is given by Seshadri \cite{S}, with further simplification by Tyc \cite{T}, see also \cite{N, F}.

Up to a change of the basis of the vector space $KZ$, Weitzenb\"ock derivations
are determined by their Jordan normal form. Each Jordan cell is an upper triangular matrix with zero diagonal.
Hence, for each dimension $m$ it is sufficient to consider a finite number of Weitzenb\"ock derivations $\Delta$.
There are algorithms which find a set of generators of $K[Z]^{\Delta}$
for a given $\Delta$. Nevertheless from computational point
of view it is difficult to find explicitly such a system. In particular, no upper bound
for the degree of the generators is known.

The algebra of constants of $\Delta$ coincides with the algebra of invariants of
the linear operator
\[
\exp(\Delta)=1+\frac{\Delta}{1!}+\frac{\Delta^2}{2!}+\cdots
\]
and $K[Z]^{\Delta}$ may be studied also with methods of invariant theory.
In particular, when we find the generators of the algebra of invariants
of $\exp(\Delta)$, this is stated as the First fundamental theorem
of the invariants of $\exp(\Delta)$. When we find the defining relations,
this is the Second fundamental theorem.

When the Jordan normal form of the Weitzenb\"ock derivation $\Delta$ consists of $2\times 2$ Jordan cells only,
we may assume that
\[
K[Z]=K[X,Y]=K[x_1,\ldots,x_n,y_1,\ldots,y_n],
\]
\[
\Delta=\sum_{i=1}^nx_i\frac{\partial}{\partial y_i}
\]
and hence the action of $\Delta$ is defined by
\[
\Delta(x_i)=0,\quad \Delta(y_i)=x_i,\quad i=1,\ldots,n.
\]
Till the end of the paper we shall consider such derivations only.
Nowicki \cite{N} conjectured that $K[X,Y]^{\Delta}$ is generated by
$x_1,\ldots,x_n$ and the determinants
\[
u_{ij}=\left\vert\begin{matrix}
x_i&x_j\\
y_i&y_j\\
\end{matrix}\right\vert,\quad 1\leq i<j\leq n.
\]
The conjecture of Nowicki was confirmed by Khoury in his Ph.D. thesis \cite{K}. His proof
is very computational and uses essentially Gr\"obner basis techniques. Another proof was given
(but not published yet) by Derksen who combined ideas of the proof of Seshadri \cite{S} of
the Weitzenb\"ock theorem with the explicit form of the invariants of the special linear
group $SL_2(K)$ acting on a direct sum of two-dimensional $SL_2(K)$-invariant vector spaces.

The purpose of our paper is to give a new elementary proof of the conjecture of Nowicki.
We use easy arguments from undergraduate algebra and a simple induction only.
We find also a uniformly looking explicit set of defining relations
of the algebra of constants $K[X,Y]^{\Delta}$ which corresponds to the reduced Gr\"obner basis
of the related ideal of $K[X,U]$, where $U=\{u_{ij}\mid 1\leq i<j\leq n\}$, as well as a basis
of $K[X,Y]^{\Delta}$ as a vector space.

\section{The conjecture of Nowicki}

We fix the sets of variables
\[
X'=\{x_1,\ldots,x_{n-1}\},\quad Y'=\{y_1,\ldots,y_{n-1}\}
\]
\[
X=X'\cup\{x_n\},\quad Y=Y'\cup\{y_n\}
\]
and the derivation of $K[X,Y]=K[X',Y',x_n,y_n]$
\[
\Delta=\sum_{i=1}^nx_i\frac{\partial}{\partial y_i}.
\]
The derivation $\Delta$ acts in the same manner on all $x_i$ and $y_i$.
Hence the $K$-algebra endomorphism $\varphi_{\alpha}$ of $K[X,Y]$, $\alpha=(\alpha_1,\ldots,\alpha_{n-1})\in K^{n-1}$,
defined by
\[
\varphi_{\alpha}(x_i)=x_i,\quad \varphi_{\alpha}(y_i)=y_i,\quad i=1,\ldots,n-1,
\]
\[
\varphi_{\alpha}(x_n)=\alpha_1x_1+\cdots+\alpha_{n-1}x_{n-1},\quad
\varphi_{\alpha}(y_n)=\alpha_1y_1+\cdots+\alpha_{n-1}y_{n-1},
\]
commutes with $\Delta$. If $f(X',Y',x_n,y_n)$ belongs to the algebra of constants $K[X,Y]^{\Delta}$, then
\[
\varphi_{\alpha}(f)=f(\varphi_{\alpha}(X'),\varphi_{\alpha}(Y'),
\varphi_{\alpha}(x_n),\varphi_{\alpha}(y_n))
\]
\[
=f(X',Y',\alpha_1x_1+\cdots+\alpha_{n-1}x_{n-1},\alpha_1y_1+\cdots+\alpha_{n-1}y_{n-1})
\in K[X',Y']^{\Delta}
\]
for all $\alpha=(\alpha_1,\ldots,\alpha_{n-1})\in K^{n-1}$.

\begin{lemma}\label{main lemma}
Let $n\geq 2$ and let a nonzero polynomial $f=f(X',Y',x_n,y_n)$ be homogeneous
with respect to $x_n,y_n$. If
\[
\varphi_{\alpha}(f)=
f(X',Y',\alpha_1x_1+\cdots+\alpha_{n-1}x_{n-1},\alpha_1y_1+\cdots+\alpha_{n-1}y_{n-1})=0
\]
for an $\alpha=(\alpha_1,\ldots,\alpha_{n-1})\in K^{n-1} \setminus 0$, then
$f$ is divisible by
\[
w_{\alpha}(X',Y',x_n,y_n)=(\alpha_1x_1+\cdots+\alpha_{n-1}x_{n-1})y_n-(\alpha_1y_1+\cdots+\alpha_{n-1}y_{n-1})x_n.
\]
\end{lemma}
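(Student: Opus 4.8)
The plan is to treat $f$ as a binary form in the two variables $x_n,y_n$ with coefficients in the integral domain $R=K[X',Y']$, and to exhibit $w_\alpha$ as a linear factor of this form. Write $a=\alpha_1x_1+\cdots+\alpha_{n-1}x_{n-1}$ and $b=\alpha_1y_1+\cdots+\alpha_{n-1}y_{n-1}$, so that $\varphi_\alpha$ is the substitution $x_n\mapsto a$, $y_n\mapsto b$ and $w_\alpha=ay_n-bx_n$. Since $f$ is homogeneous of some degree $d$ in $x_n,y_n$, I would write $f=\sum_{j=0}^d c_j x_n^{d-j}y_n^{j}$ with $c_j\in R$, and record that the hypothesis $\varphi_\alpha(f)=0$ reads $\sum_{j=0}^d c_j a^{d-j}b^{j}=0$ in $R$.

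First I would pass to the field of fractions $F$ of $R$. Because $\alpha\neq 0$, the linear form $a$ is a nonzero element of $R$, hence invertible in $F$, so I may substitute $y_n=(b/a)x_n$. Viewing $w_\alpha=ay_n-bx_n$ as a polynomial in $y_n$ over $F[x_n]$, its leading coefficient $a$ is a unit of $F$, so division with remainder gives $f=w_\alpha\,q+r$ with $q\in F[x_n,y_n]$ and $r\in F[x_n]$. Evaluating at $y_n=(b/a)x_n$ kills the term $w_\alpha q$ and turns $f$ into $x_n^{d}a^{-d}\sum_j c_j a^{d-j}b^{j}=0$; hence $r=0$ and $w_\alpha$ divides $f$ in $F[x_n,y_n]$, with quotient $q$ homogeneous of degree $d-1$ by comparison of degrees.

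The main point is then to descend this factorization from $F[x_n,y_n]$ back to $R[x_n,y_n]=K[X,Y]$. For this I would invoke Gauss's lemma: it suffices to check that $w_\alpha$ is primitive as a polynomial in $x_n,y_n$ over the UFD $R$. Its coefficients are $a$ and $-b$, so primitivity amounts to $\gcd(a,b)=1$ in $R$. This holds because $a$ lies in $K[X']$ and $b$ in $K[Y']$, two polynomial rings in disjoint sets of variables; any common divisor must have $Y'$-degree and $X'$-degree zero, hence is a constant, hence a unit. A primitive polynomial that divides $f\in R[x_n,y_n]$ in $F[x_n,y_n]$ already divides it in $R[x_n,y_n]$, which gives $w_\alpha\mid f$ in $K[X,Y]$ as desired.

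The only place that requires care is this last descent step; everything before it is the elementary factor theorem for a single linear factor. I would state the primitivity computation explicitly, since it is exactly what rules out the quotient $q$ acquiring genuine denominators from $R$, and it is where the hypothesis $\alpha\neq 0$ (guaranteeing $a,b\neq 0$) enters.
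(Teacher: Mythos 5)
Your proof is correct, but it takes a genuinely different route from the paper's. You treat $f$ as a binary form in $x_n,y_n$ over $R=K[X',Y']$, pass to the fraction field $F$ of $R$, apply the factor theorem (divide by $w_\alpha$ in the variable $y_n$, then evaluate at the root $y_n=(b/a)x_n$, which is exactly where the hypothesis $\varphi_\alpha(f)=0$ enters), and then descend from $F[x_n,y_n]$ to $R[x_n,y_n]$ via Gauss's lemma, for which you verify that $w_\alpha$ is primitive, i.e.\ that $\gcd(a,b)=1$ in $R$. The paper instead never leaves $K[X,Y]$ and argues by induction on the degree $p$ of $f$ in $(x_n,y_n)$: it multiplies $f$ by $a$, splits off the term $a_pw_\alpha y_n^{p-1}$ so that $af=a_pw_\alpha y_n^{p-1}+x_nh$ with $h$ homogeneous of degree $p-1$ and again annihilated by $\varphi_\alpha$, obtains $w_\alpha\mid h$ by induction, hence $w_\alpha\mid af$, and finally cancels the factor $a$ using the irreducibility of $w_\alpha$ together with the fact that $a$ does not involve $x_n,y_n$. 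The two arguments must each dispose of the same obstruction, the spurious factor $a$: in your version it appears as a potential denominator and is removed by primitivity, while in the paper it is removed by primality of $w_\alpha$; indeed the paper's unproved assertion that $w_\alpha$ is irreducible is essentially equivalent to the coprimality $\gcd(a,b)=1$ that you check explicitly (a polynomial of degree one in $y_n$ is irreducible exactly when its two coefficients are coprime). What your route buys is a one-pass, non-inductive argument built from standard machinery, with the key coprimality made explicit rather than asserted; what the paper's route buys is complete self-containedness --- no localization or content theory, only polynomial identities and induction --- in keeping with its stated aim of an elementary proof.
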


\begin{proof}
Let
\[
0\not= f = a_py_n^p+a_{p-1}x_ny_n^{p-1}+\cdots+a_1x_n^{p-1}y_n+a_0x_n^p,
\quad a_i\in K[X',Y'],
\]
be a polynomial homogeneous in $x_n$, $y_n$. Then
\[
(\alpha_1x_1+\cdots+\alpha_{n-1}x_{n-1})f = a_p w_{\alpha}y_n^{p-1}
\]
\[
+ a_p (\alpha_1y_1+\cdots+\alpha_{n-1}y_{n-1})x_ny_n^{p-1} + (\alpha_1x_1+\cdots+\alpha_{n-1}x_{n-1})x_n g,
\]
where $g = a_{p-1}y_n^{p-1} + a_{p-2}x_ny_n^{p-2}+\cdots+a_1x_n^{p-2}y_n+a_0x_n^{p-1}$.
Since $\varphi_{\alpha}(a_p (\alpha_1y_1+\cdots+\alpha_{n-1}y_{n-1})y_n^{p-1} + g) = 0$
we can assume by induction on the degree of a polynomial relative to $x_n, \, y_n$
that $w_{\alpha}$ divides $a_p (\alpha_1y_1+\cdots+\alpha_{n-1}y_{n-1})y_n^{p-1} + g$
(the base of induction for the degree equal to zero is obviously correct) and
so $w_{\alpha}$ divides $(\alpha_1x_1+\cdots+\alpha_{n-1}x_{n-1})f$.

The polynomial $w_{\alpha}(X',Y',x_n,y_n)$
is irreducible in $K[X',Y',x_n,y_n]$.
Since
$\alpha_1x_1+\cdots+\alpha_{n-1}x_{n-1}$ does not depend on $x_n,y_n$, we conclude that
$w_{\alpha}(X',Y',x_n,y_n)$ divides $f(X',Y',x_n,y_n)$ in $K[X',Y',x_n,y_n]$.
\end{proof}

\begin{corollary}\label{corollary of Lemma 1}
If $\varphi_{\alpha}(f)= 0$
for all $\alpha=(\alpha_1,\ldots,\alpha_{n-1})\in K^{n-1} \setminus 0$, then
$f = 0$ if $n > 2$ and $f$ is divisible by $u_{12} = x_1 y_2 - x_2 y_1$ if $n = 2$.
\end{corollary}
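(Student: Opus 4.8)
The plan is to reduce immediately to the situation treated in Lemma~\ref{main lemma}, namely when $f$ is homogeneous with respect to $x_n,y_n$, and then to exploit unique factorization in $K[X',Y',x_n,y_n]$. Since the hypothesis provides $\varphi_{\alpha}(f)=0$ for \emph{every} nonzero $\alpha$, I would first write $f=\sum_{d\geq 0}f_d$, where $f_d$ is the component homogeneous of degree $d$ in $x_n,y_n$, and show that each $f_d$ already satisfies $\varphi_{\alpha}(f_d)=0$ for all $\alpha\neq 0$. The device is to rescale the parameter: because $\varphi_{t\alpha}(x_n)=t\,\varphi_{\alpha}(x_n)$ and $\varphi_{t\alpha}(y_n)=t\,\varphi_{\alpha}(y_n)$, homogeneity of $f_d$ gives $\varphi_{t\alpha}(f)=\sum_d t^d\varphi_{\alpha}(f_d)$, and this vanishes for all $t\neq 0$. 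As $K$ is infinite, a polynomial in $t$ with infinitely many roots is identically zero, so $\varphi_{\alpha}(f_d)=0$ for each $d$ and each $\alpha\neq 0$.

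With this reduction in hand I would apply Lemma~\ref{main lemma} to every nonzero $f_d$: for each $\alpha\neq 0$ the irreducible polynomial $w_{\alpha}$ divides $f_d$, and hence $w_{\alpha}$ divides $f$. The key elementary observation is that $w_{\alpha}$ and $w_{\beta}$ are associate only when $\alpha$ and $\beta$ are proportional; indeed, if $w_{\alpha}=c\,w_{\beta}$ then comparing the coefficients of $y_n$ and of $x_n$ forces $\sum\alpha_ix_i=c\sum\beta_ix_i$ and $\sum\alpha_iy_i=c\sum\beta_iy_i$, whence $\alpha_i=c\beta_i$ for all $i$. Thus non-proportional parameters yield pairwise non-associate irreducible divisors of $f$.

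The case distinction is then immediate. When $n>2$ the parameter space $K^{n-1}\setminus 0$ has $n-1\geq 2$ coordinates, so it contains infinitely many pairwise non-proportional vectors (for example $(1,t,0,\ldots,0)$ with $t\in K$, using that $K$ is infinite). A nonzero polynomial cannot have infinitely many pairwise non-associate irreducible factors, so $f$ must be zero. When $n=2$ we have $X'=\{x_1\}$, $Y'=\{y_1\}$ and $x_n=x_2$, $y_n=y_2$, so $w_{\alpha}=\alpha_1(x_1y_2-x_2y_1)=\alpha_1u_{12}$ for the single free parameter $\alpha_1\neq 0$; hence each nonzero $f_d$, and therefore $f$, is divisible by $u_{12}$.

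I expect the only genuine subtlety to be the first step, the passage from an arbitrary $f$ to its $x_n,y_n$-homogeneous components, since Lemma~\ref{main lemma} is stated only for polynomials homogeneous in $x_n,y_n$; everything after that is a short argument using the irreducibility of $w_{\alpha}$ and unique factorization. One small point worth recording is the degree-zero component $f_0\in K[X',Y']$, for which $\varphi_{\alpha}(f_0)=f_0$, so that the hypothesis forces $f_0=0$ directly, in agreement with the stated conclusion.
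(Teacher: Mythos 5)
Your proof is correct and follows essentially the same route as the paper: for $n>2$ the family $w_{(1,t,0,\ldots,0)}$, $t\in K$, supplies infinitely many pairwise non-proportional (hence non-associate) irreducible divisors of $f$, forcing $f=0$, while for $n=2$ every $w_{\alpha}$ is a scalar multiple of $u_{12}$. Your preliminary reduction to $x_n,y_n$-homogeneous components via the scaling $\varphi_{t\alpha}$ is sound but is not needed in the paper, since the corollary is read with the homogeneity hypothesis of Lemma~\ref{main lemma} still in force (and that is how it is applied in the proof of Theorem~\ref{conjecture of Nowicki}).
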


\noindent {\it Explanation.} If $n>2$ then $f$ is divisible by $(x_1 + \alpha_2x_2)y_n - (y_1 + \alpha_2y_2)x_n$
where $\alpha_2$ is any element of $K$. So $f$ is divisible by infinitely many
pairwise non-proportional irreducible polynomials. This is of course impossible if $f \neq 0$.
If $n = 2$ then by Lemma \ref{main lemma},
$f$ is divisible by $u_{12}$ and in this case all $w_{\alpha}$
with non-zero $\alpha$ are proportional to $u_{12}$. \hfill $\Box$

\begin{theorem}\label{conjecture of Nowicki}
Let $X=\{x_1,\ldots,x_n\}$, $Y=\{y_1,\ldots,y_n\}$,
$n\geq 1$, and let $\Delta$ be the Weitzenb\"ock derivation of $K[X,Y]$ defined by
\[
\Delta=\sum_{i=1}^nx_i\frac{\partial}{\partial y_i}.
\]
The algebra of constants $K[X,Y]^{\Delta}$ is generated by
\[
x_i,\ i=1,\ldots,n,\quad
u_{ij}=x_iy_j-x_jy_i,\ 1\leq i<j\leq n.
\]
\end{theorem}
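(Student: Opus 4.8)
The plan is to prove the theorem by induction on $n$, using the machinery already set up in Lemma~\ref{main lemma} and Corollary~\ref{corollary of Lemma 1}. The base case $n=1$ is trivial: the derivation is $\Delta = x_1\partial/\partial y_1$, and a polynomial $f(x_1,y_1)$ is a constant iff $\partial f/\partial y_1 = 0$ (after multiplying by $x_1$, so one checks $f$ cannot involve $y_1$), hence $K[x_1,y_1]^\Delta = K[x_1]$, which is generated by $x_1$ alone (there are no $u_{ij}$). For the inductive step I assume the result for $n-1$ variables and take an arbitrary constant $f \in K[X,Y]^\Delta$.

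First I would reduce to the case where $f$ is homogeneous with respect to the pair $x_n, y_n$, since $\Delta$ preserves this bidegree and so each homogeneous component of $f$ is separately a constant. So assume $f = f(X',Y',x_n,y_n)$ is homogeneous of degree $p$ in $x_n,y_n$. The key observation, established in the preamble to Lemma~\ref{main lemma}, is that for each $\alpha \in K^{n-1}$ the specialization $\varphi_\alpha(f)$ lands in $K[X',Y']^\Delta$, to which the inductive hypothesis applies: $\varphi_\alpha(f)$ is a polynomial in $x_1,\ldots,x_{n-1}$ and the $u_{ij}$ with $i<j\le n-1$. The strategy is to subtract off a correction term, built from the generators $x_n$ and $u_{in} = x_i y_n - x_n y_i$, so as to kill the top $y_n$-degree part of $f$ and descend.

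Concretely, I would argue as follows. Write the leading coefficient (the $a_p \in K[X',Y']$ multiplying $y_n^p$) and observe that applying $\varphi_\alpha$ and reading off the coefficient of the highest power of the substituted variables, one finds that $a_p$, after the substitution $x_n \mapsto \sum \alpha_i x_i$, $y_n \mapsto \sum \alpha_i y_i$, must itself be a constant for every $\alpha$; this lets me invoke Corollary~\ref{corollary of Lemma 1} to control $a_p$. The cleaner route, which I expect to be the intended one, is to show directly that any homogeneous constant $f$ can be written as a polynomial in $x_n$ and the $u_{in}$ with coefficients in $K[X',Y']^\Delta$: since $\Delta(u_{in}) = \Delta(x_i y_n - x_n y_i) = x_i x_n - x_n x_i = 0$ and $\Delta(x_n)=0$, these are genuine constants, and modulo them one can eliminate $y_n$ entirely. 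Replacing $y_n$ by an expression in $u_{in}$, $x_i$ and $x_n$ reduces the $y_n$-degree, and an induction on $p$ together with the inductive hypothesis on the $K[X',Y']$-coefficients closes the argument.

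The main obstacle I anticipate is the bookkeeping in this elimination step: one must verify that after subtracting the correction term the remainder is still a constant and has strictly smaller degree in $y_n$, so that the induction on $p$ is valid, and simultaneously that the coefficients produced genuinely lie in the algebra generated by $x_1,\ldots,x_{n-1}$ and the $u_{ij}$ with $i,j\le n-1$, not merely in $K[X',Y']^\Delta$ abstractly. Corollary~\ref{corollary of Lemma 1} is what guarantees that the obstruction to pushing the $y_n$-degree all the way down vanishes (for $n>2$ a nonzero constant cannot be annihilated by every $\varphi_\alpha$), and the delicate point is threading this vanishing statement through the homogeneous decomposition so that every piece is accounted for by the proposed generators. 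Once the reduction in $y_n$-degree is established, the induction on $n$ and on $p$ completes the proof.
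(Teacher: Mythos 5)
Your overall skeleton matches the paper's: induction on $n$ and on the degree in $x_n,y_n$, reduction to a constant $f$ homogeneous in $x_n,y_n$, the specializations $\varphi_\alpha$ landing in $K[X',Y']^{\Delta}$, Corollary~\ref{corollary of Lemma 1} handling the case where every $\varphi_\alpha(f)$ vanishes (forcing $n=2$ and divisibility by $u_{12}$), and the identity $u_{in}=x_iy_n-x_ny_i$ as the device for lowering the $y_n$-degree. But the step you yourself flag as ``the main obstacle'' --- the elimination of $y_n$ --- is precisely where the real content lies, and your proposal does not supply the mechanism. ``Replacing $y_n$ by an expression in $u_{in}$, $x_i$ and $x_n$'' literally means $y_n=(u_{in}+x_ny_i)/x_i$, which requires division by $x_i$ and leaves the polynomial ring; nothing in your sketch explains why that division is harmless, and it is not.

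The paper's resolution is a degree count that your proposal never makes. First, $a_p\in K[X',Y']^{\Delta}$ is read off directly from the coefficient of $y_n^p$ in $\Delta(f)=0$; no $\varphi_\alpha$ or Corollary is needed for this, contrary to your first suggested route, which is muddled on this point. Second --- and this is the pivotal step --- in the case where some $\varphi_\alpha(f)\neq 0$, the inequality $\deg_{X'}(v)\geq\deg_{Y'}(v)$, valid for homogeneous constants $v\in K[X',Y']^{\Delta}$ by the inductive description of the generators, yields
\[
\deg_{X'}(a_p)=\deg_X(f)\geq\deg_Y(f)=p+\deg_{Y'}(a_p).
\]
Since each $u_{ij}$ contributes equally to the $X$- and $Y$-degrees, writing $a_p$ in the generators $X',U'$ (induction on $n$) forces every term to carry at least $p$ explicit factors from $X'$, i.e.\ $a_p=\sum x_{s_1}\cdots x_{s_p}c_s(X',U')$. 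These $p$ spare $x$-factors are exactly what allows one to pair each $y_n$ with an $x_{s_j}$ via $x_{s_j}y_n=u_{s_jn}+x_ny_{s_j}$, entirely inside the polynomial ring, giving $f=\sum c_s\prod_j u_{s_jn}+x_nf_1$ with $f_1$ again a constant of strictly smaller degree in $x_n,y_n$. (One also needs the preliminary normalization of dividing out the power $x_n^q$ so that the coefficient $a_p$ of $y_n^p$ is nonzero.) Without this degree inequality --- which is available only because of the case split on whether some $\varphi_\alpha(f)\neq 0$ --- the descent in the $y_n$-degree cannot be carried out over $K[X,Y]$, so what you deferred as ``bookkeeping'' is in fact the essential idea, and the gap is genuine.
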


\begin{proof}
Let
\[
f=f(X',Y',x_n,y_n)\in K[X,Y]^{\Delta}.
\]
For a monomial $v\in K[X,Y]$ which is of degree $(d_1,d_2)$ with respect to $(X,Y)$,
i.e., of degree $d_1$ in $X$ and $d_2$ in $Y$, the image $\Delta(v)$ is of degree
$(d_1+1,d_2-1)$. Hence, if $f\in K[X,Y]^{\Delta}$, then the homogeneous in $(X,Y)$
components of $f$ are also in $K[X,Y]^{\Delta}$ and we may assume that $f$ is homogeneous
in $(X,Y)$. Similarly, if a monomial $u$ is of total degree $p$ in $x_n,y_n$, the same is true for $\Delta(u)$.
Again, we may assume that $f$ is homogeneous in $x_n,y_n$. We shall prove the theorem by induction
on $n$ and on the total degree in $x_n,y_n$. If $n=1$, then $K[x_1,y_1]^{\Delta}=K[x_1]$. We
assume that $n>1$ and $K[X',Y']^{\Delta}$ is generated by $X'$ and $U'=\{u_{ij}\mid 1\leq i<j\leq n-1\}$.
Since $\text{deg}_X(u_{ij})=\text{deg}_Y(u_{ij})=1$ and
$1=\text{deg}_X(x_i)>\text{deg}_Y(x_i)=0$, the homogeneous in $(X,Y)$ elements
$v\in K[X',Y']^{\Delta}$ satisfy $\text{deg}_{X'}(v)\geq \text{deg}_{Y'}(v)$.
If $\text{deg}_{x_n,y_n}(f)=0$, then $f=f(X',Y')\in K[X',Y']^{\Delta}$. Hence we may consider the
case $\text{deg}_{x_n,y_n}(f)>0$. We write $f$ in the form
\[
f=(a_py_n^p+a_{p-1}x_ny_n^{p-1}+\cdots+a_1x_n^{p-1}y_n+a_0x_n^p)x_n^q,
\]
where $a_j=a_j(X',Y')\in K[X',Y']$ and $a_p\not=0$. Since both $f$ and $x_n^q$ belong to
$K[X,Y]^{\Delta}$, the same holds for $f/x_n^q$ and we may assume that $q=0$.
The next observation is that $a_p\in K[X',Y']^{\Delta}$ because
\[
0=\Delta(f)=\Delta(a_p)y_n^p+(pa_p+\Delta(a_{p-1}))x_ny_n^{p-1}+\cdots+(a_1+\Delta(a_0))x_n^p.
\]
Hence $a_p$ has the form
\[
a_p=\sum x_{s_1}\cdots x_{s_k}b_s(U').
\]
If $\varphi_{\alpha}(f)=0$ for all $\alpha\in K^{n-1}$, then
Corollary \ref{corollary of Lemma 1} gives that
$n=2$ and $f$ is divisible by $u_{12}$. Hence $f_1=f/u_{12}$ also belongs to $K[X,Y]^{\Delta}$.
Since $\text{deg}_{x_2,y_2}(f_1)=\text{deg}_{x_2,y_2}(f)-1$, we apply inductive arguments
and conclude that $f\in K[x_1,x_2,u_{12}]$. Now we consider the case when
$\varphi_{\alpha}(f)\not=0$ for some $\alpha\in K^{n-1}$. The operators $\varphi_{\alpha}$
and $\Delta$ commute, so we have that $\varphi_{\alpha}(f)\in K[X',Y']^{\Delta}$. Also clearly
\[
\text{deg}_X(f)=\text{deg}_{X'}(\varphi_{\alpha}(f))
\geq \text{deg}_{Y'}(\varphi_{\alpha}(f))=\text{deg}_Y(f).
\]
Hence
\[
\text{deg}_{X}(f)=\text{deg}_{X'}(a_p)\geq \text{deg}_Y(f)=p+\text{deg}_{Y'}(a_p)\, \,\, \text{and}
\]
\[
f=\sum x_{s_1}\cdots x_{s_p}c_s(X',U')y_n^p+\sum_{i=0}^{p-1}a_i(X',Y')x_n^{p-i}y_n^i.
\]
We rewrite $x_{s_j}y_n$ in the form
\[
x_{s_j}y_n=(x_{s_j}y_n-x_ny_{s_j})+x_ny_{s_j}
=u_{s_jn}+x_ny_{s_j}
\]
and obtain
\[
x_{s_1}\cdots x_{s_p}y_n^p=\prod_{j=1}^p(u_{s_jn}+x_ny_{s_j})
=\prod_{j=1}^pu_{s_jn}+x_ng(X',Y',x_n,y_n)
\]
for some $g(X',Y',x_n,y_n)\in K[X',Y',x_n,y_n]$. Hence
\[
f=\sum c_s(X',U')\left(\prod_{j=1}^pu_{s_jn}+x_ng(X',Y',x_n,y_n)\right)
+\sum_{i=0}^{p-1}a_i(X',Y')x_n^{p-i}y_n^i
\]
\[
=\sum c_s(X',U')\prod_{j=1}^pu_{s_jn}+x_nf_1(X',Y',x_n,y_n)
\]
for some $f_1(X',Y',x_n,y_n)\in K[X',Y',x_n,y_n]$.
Since $f,x_n,c_s(X',U')\prod_{j=1}^pu_{s_jn}\in K[X,Y]^{\Delta}$, the same is true for
$f_1(X',Y',x_n,y_n)$ and we apply induction on the degree of $f_1$ in $x_n,y_n$. This completes the proof
of the theorem.
\end{proof}

\section{Defining relations}

In this section we shall show that the defining relations
of the algebra of constants $K[X,Y]^{\Delta}$, with respect to the
generators $x_i$, $u_{ij}=x_iy_j-x_jy_i$ from the conjecture
of Nowicki, consists of the relations
\[
r(i,j,k,l)= u_{ij}u_{kl} - u_{ik}u_{jl}+ u_{il}u_{jk} = 0,\quad
1\leq i<j<k<l\leq n,
\]
\[
s(i,j,k)= x_iu_{jk} - x_ju_{ik} + x_ku_{ij} = 0,\quad
1\leq i<j<k\leq n.
\]
Our proof will give that the elements $r(i,j,k,l)$ and $s(i,j,k)$
form the reduced Gr\"obner basis of the corresponding ideal
of $K[X,U]$, $U=\{u_{ij}\mid 1\leq i<j\leq n\}$,
with respect to a suitable admissible order. This provides also
a basis of $K[X,Y]^{\Delta}$ as a vector space.

It will be convenient to identify the generator $u_{ij}\in U$
with the open interval $(i,j)$ on the real line and define the interval length
of $u_{ij}$ by $\vert u_{ij}\vert=j-i$. We say that $u_{ij}$ and
$u_{kl}$ intersect each other
if the intervals $(i,j)$ and $(k,l)$ have a nonempty
intersection and are not contained in each other. This means that
one of the inequalities $i<k<j<l$ and $k<i<l<j$ holds. We say also that
$u_{ij}$ covers $x_k$ if $k$ belongs to the open interval $(i,j)$.

We summarize the necessary background on Gr\"obner bases. For more
details we refer
for example to the book by Adams and Loustaunau \cite{AL}.

Let $Z=\{z_1,\ldots,z_m\}$ be a set of variables. The linear ordering
$\succ$ on the set of monomials $[Z]$ is admissible, if
it satisfies the descending chain condition
and $u\succ v$ for $u,v\in [Z]$ imlies $uw\succ vw$ for all $w\in [Z]$.
Every nonzero polynomial $f(Z)$ of $K[Z]$ can be written as
\[
f=\beta_1v_1+\beta_2v_2+\cdots+\beta_kv_k, \quad
0\not=\beta_j\in K,
\]
where $v_1\succ v_2\succ\cdots\succ v_k$.
We denote by $\overline{f}$ the leading monomial $v_1$ of $f$ in $[Z]$.

Let $J$ be an ideal of $K[Z]$
and let $I(J)$ be the set of leading monomials of $J$.
A generating set $G$ of $J$ is called a Gr\"obner basis of $J$
(with respect to the fixed admissible order on $[Z]$)
if for any $f\in J$ there exists an $f_i\in G$ such that
$\overline{f}$ is divisible by $\overline{f_i}$.
Equivalently, the set $I(G)$ of leading monomials of $G$
generates the semigroup ideal $I(J)$ of $[Z]$.
A subset $G$ of $J$ is a Gr\"obner basis of $J$ if and only if it
has the following property. The set
of normal monomials of $[Z]$ with respect to $G$, i.e., the monomials which are
not divisible by an element of $I(G)$, forms a $K$-basis of the factor
algebra $K[Z]/J$. The set of normal monomials with respect to $G$
spans $K[Z]/J$ for any subset $G$ of $J$. Hence $G\subset J$ is a Gr\"obner basis of $J$
if and only if the set of normal monomials with respect to $G$ is linearly
independent in the factor algebra $K[Z]/J$.
The Gr\"obner basis $G$ is reduced if
the monomials $v_i$ participating in each $f_j\in G$ are not divisible
by the leading monomials of $G\backslash \{f_j\}$.

It is easy to see that the following ordering is admissible.
We believe that it may be applied also to other problems.

\begin{definition}\label{deg-interval length-lex order}
We order the monomials of $K[X,U]$ first by the degree in $X$ and $U$,
then by the total interval length of the participating
variables $u_{ij}$ and finally lexicographically, as follows.

Let
\[
v=x_{i_1}\cdots x_{i_c}u_{j_1k_1}\cdots u_{j_dk_d},
\]
where $i_1\leq\cdots\leq i_c$, $j_1\leq\cdots\leq j_d$ and
$k_a\leq k_{a+1}$ if $j_a=j_{a+1}$, and
\[
v'=x_{i'_1}\cdots x_{i'_{c'}}u_{j'_1k'_1}\cdots u_{j'_{d'}k'_{d'}}
\]
with similar restrictions on $i'_a,j'_b,k'_b$.

We define $v\succ v'$
if

(i) $c>c'$ (the degree of $v$ in $X$ is bigger than the degree of $v'$ in $X$);

(ii) $c=c'$ and $d>d'$ (the degree of $v$ in $U$ is bigger than the degree of $v'$ in $U$);

(iii) $c=c'$, $d=d'$ and
\[
\sum_{b=1}^d\vert u_{j_bk_b}\vert> \sum_{b=1}^d\vert u_{j'_bk'_b}\vert
\]
(the total interval length of $v$ is bigger than that of $v'$);

(iv) $c=c'$, $d=d'$,
\[
\sum_{b=1}^d\vert u_{j_bk_b}\vert= \sum_{b=1}^d\vert u_{j'_bk'_b}\vert
\]
and $\omega\succ \omega'$
for the $(c+2d)$-tuples
\[
\omega=(i_1,\ldots, i_c,j_1,\ldots,j_d,k_1,\ldots,k_d),\quad
\omega'=(i'_1,\ldots, i'_c,j'_1,\ldots,j'_d,k'_1,\ldots,k'_d),
\]
where $(a_1,\ldots,a_p)\succ (b_1,\ldots,b_p)$
if $a_1=b_1,\ldots,a_e=b_e$, $a_{e+1}<b_{e+1}$ for some $e$.

We call this ordering degree--interval length--lexicographic order (DILL order) of $K[X,U]$.
\end{definition}

\begin{theorem}\label{defining relations}
Let $X=\{x_1,\ldots,x_n\}$, $Y=\{y_1,\ldots,y_n\}$,
$n\geq 1$, and let $\Delta$ be the Weitzenb\"ock derivation of $K[X,Y]$ defined by
\[
\Delta=\sum_{i=1}^nx_i\frac{\partial}{\partial y_i}.
\]

{\rm (i)} The algebra of constants has the presentation
\[
K[X,Y]^{\Delta}\cong
K[X,U]/(R,S),
\]
where $X=\{x_i\mid i=1,\ldots,n\}$,
$U=\{u_{ij}\mid 1\leq i<j\leq n\}$ and
the ideal $(R,S)$ is generated by
\[
R= \{r(i,j,k,l)= u_{ij}u_{kl} - u_{ik}u_{jl}+ u_{il}u_{jk}\mid
1\leq i<j<k<l\leq n\},
\]
\[
S = \{s(i,j,k)= x_iu_{jk} - x_ju_{ik} + x_ku_{ij}\mid
1\leq i<j<k\leq n\}.
\]
{\rm (ii)} The set $R\cup S$ is the reduced Gr\"obner basis of the ideal
$(R,S)$ with respect to the DILL order of $K[X,U]$.

{\rm (iii)} As a vector space $K[X,Y]^{\Delta}$ has a basis consisting
of all products
\[
x_{i_1}\cdots x_{i_c}u_{j_1k_1}\cdots u_{j_dk_d}
\]
such that the generators $u_{j_pk_p}$ and $u_{j_qk_q}$ do not intersect
each other and $u_{j_pk_p}$ does not cover $x_{i_t}$ for any $p,q,t$.
\end{theorem}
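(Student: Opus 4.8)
The plan is to prove the three parts of Theorem~\ref{defining relations} in a specific order, exploiting the fact that (i) is essentially the content of Theorem~\ref{conjecture of Nowicki} together with a generation statement for the relations, while (ii) and (iii) are tightly linked through the standard Gr\"obner basis dictionary. First I would establish (i) by observing that the surjection $\pi\colon K[X,U]\to K[X,Y]^{\Delta}$, sending the abstract generators $x_i,u_{ij}$ to the corresponding polynomials, is well-defined and surjective by Theorem~\ref{conjecture of Nowicki}. It is immediate to check that $r(i,j,k,l)$ and $s(i,j,k)$ lie in $\ker\pi$: the relations $r(i,j,k,l)$ are the classical Pl\"ucker relations among the $2\times 2$ minors $u_{ij}$, and $s(i,j,k)$ expands directly to $0$ upon substituting $u_{ab}=x_ay_b-x_bcty_a$. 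Thus $(R,S)\subseteq\ker\pi$ and we get an induced surjection $K[X,U]/(R,S)\to K[X,Y]^{\Delta}$. The real work in (i) is the reverse inclusion $\ker\pi\subseteq(R,S)$, which I would not prove directly but rather deduce from (ii) and (iii): once I know the normal monomials with respect to $R\cup S$ are linearly independent in $K[X,Y]^{\Delta}$ and span $K[X,U]/(R,S)$, a dimension count in each multidegree forces $(R,S)=\ker\pi$.

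The core of the argument is therefore (ii) and (iii), which I would prove simultaneously. First I would compute the leading monomials under the DILL order. For $r(i,j,k,l)$ with $i<j<k<l$, among the three terms $u_{ij}u_{kl}$, $u_{ik}u_{jl}$, $u_{il}u_{jk}$ the total interval length is $(j-i)+(l-k)$, $(k-i)+(l-j)$, $(l-i)+(k-j)$ respectively; the third is strictly the largest, so $\overline{r(i,j,k,l)}=u_{il}u_{jk}$ and this leading term corresponds exactly to an intersecting pair $u_{il},u_{jk}$ (indeed $i<j<l$ and $j<l<$\ldots, giving the pattern $i<j<l$ with $k$ between, i.e. the two intervals $(i,l)$ and $(j,k)$ are nested)---so I would instead identify the genuinely intersecting leading term by checking the interval combinatorics carefully. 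For $s(i,j,k)$ with $i<j<k$, all three terms have the same $X$-degree and $U$-degree, and the interval lengths are $(k-j)$, $(k-i)$, $(j-i)$, so the largest is $u_{ik}$ attached to $x_j$, giving $\overline{s(i,j,k)}=x_ju_{ik}$; this leading monomial is precisely the pattern where $u_{ik}$ covers $x_j$. The upshot is that the leading monomials of $R\cup S$ are exactly the ``forbidden'' configurations---a pair of intersecting generators, or a generator covering a variable $x_t$---so that the normal monomials are precisely the products described in (iii).

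Having matched normal monomials to the combinatorial basis of (iii), I would prove (ii) via the criterion quoted in the excerpt: a generating set is a Gr\"obner basis if and only if the normal monomials are linearly independent in the factor algebra. Since $\pi$ maps $K[X,U]/(R,S)$ onto $K[X,Y]^{\Delta}$ and the normal monomials always span the factor algebra, it suffices to show the images under $\pi$ of the normal monomials are linearly independent in $K[X,Y]^{\Delta}\subseteq K[X,Y]$. This is where I would do the main combinatorial work: I would set up an injective ``reading map'' that recovers from the leading monomial of $\pi(v)$ (in a suitable monomial order on $K[X,Y]$, e.g. taking each $u_{ij}\mapsto x_iy_j$ as leading term) the original non-intersecting, non-covering product $v$. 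The non-intersecting and non-covering conditions are exactly what guarantee that distinct normal monomials produce distinct leading monomials in $K[X,Y]$, hence linear independence. This simultaneously yields (ii) (the Gr\"obner basis property), (iii) (the vector space basis, since these linearly independent spanning images form a basis), and the reverse inclusion needed to finish (i). I expect the main obstacle to be the bookkeeping in this reading map---verifying that the leading term of a product $\prod u_{j_pk_p}\prod x_{i_t}$ under the chosen order on $K[X,Y]$ is $\prod x_{j_p}y_{k_p}\prod x_{i_t}$ with no cancellation or collision precisely when the interval conditions hold, and that reducedness of the Gr\"obner basis follows because no term of any $r$ or $s$ other than its own leading monomial is itself a forbidden configuration.
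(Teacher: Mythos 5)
Your overall architecture matches the paper's proof exactly: show $R,S\subseteq\ker\pi$, characterize the normal monomials via leading terms, prove linear independence of their $\pi$-images by a leading-monomial recovery argument in $K[X,Y]$, and then let the Gr\"obner criterion deliver (i), (ii), (iii) simultaneously. However, there is a genuine error at the step on which everything else rests: the leading monomial of $r(i,j,k,l)$ under the DILL order. You claim that among $u_{ij}u_{kl}$, $u_{ik}u_{jl}$, $u_{il}u_{jk}$ the third has \emph{strictly} largest total interval length. That is false: $\vert u_{ik}u_{jl}\vert=(k-i)+(l-j)=(l-i)+(k-j)=\vert u_{il}u_{jk}\vert$, so interval length alone cannot separate the intersecting pair from the nested pair --- this tie is precisely why the DILL order carries a final lexicographic component. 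Under that tie-break one compares the tuples $(i,j,k,l)$ and $(i,j,l,k)$, and since smaller entries win, $u_{ik}u_{jl}\succ u_{il}u_{jk}$; hence $\overline{r(i,j,k,l)}=u_{ik}u_{jl}$, the \emph{intersecting} pair, as the paper computes. Your stated conclusion $\overline{r(i,j,k,l)}=u_{il}u_{jk}$ would make the nested configuration forbidden and the intersecting one normal, which contradicts part (iii) of the very theorem you are proving. You noticed the inconsistency (``so I would instead identify the genuinely intersecting leading term by checking the interval combinatorics carefully'') but never resolved it, and resolving it requires exactly the observation you missed: the lengths tie and the lex component decides. This is not cosmetic, since with the wrong leading terms the set of normal monomials changes, and your subsequent ``reading map'' would be built on the wrong combinatorial class. (Note also that $\pi(u_{ik}u_{jl})$ and $\pi(u_{il}u_{jk})$ have the same leading monomial $x_ix_jy_ky_l$ in $K[X,Y]$, so only one of the two configurations can survive into a basis; the theorem says it is the nested one.)

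A secondary, smaller gap: the linear-independence step is where the paper does real work (Step 3: induction on $\deg_Y$, locating the smallest $k$ with $y_k$ present in the leading monomial, then the largest $j$ with $u_{jk}$ dividing $w$, and using both the non-intersection and non-covering hypotheses to show this factor is forced), whereas you only assert that a reading map exists and that the interval conditions ``are exactly what guarantee'' injectivity. That assertion is the theorem's content at that point, not a proof of it; as written your proposal defers the two essential verifications (the correct leading terms and the recovery argument) rather than supplying them.
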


\begin{proof}
Let $\pi:K[X,U]\to K[X,Y]^{\Delta}$ be the homomorphism defined by
\[
\pi(x_i)=x_i,\quad \pi(u_{jk})=x_jy_k-x_ky_j,
\]
and let $J=\text{Ker}(\pi)\subset K[X,U]$.
We shall prove the following:

(1) The ideal $J$ contains $R$ and $S$;

(2) The set of normal monomials with respect to $R\cup S$ coincides with
the set of all products
\[
x_{i_1}\cdots x_{i_c}u_{j_1k_1}\cdots u_{j_dk_d}
\]
such that the generators $u_{j_pk_p}$ and $u_{j_qk_q}$ do not intersect
each other and $u_{j_pk_p}$ does not cover $x_{i_t}$ for any $p,q,t$;

(3) The images of these normal monomials under $\pi$ are linearly independent in $K[X,Y]$.

Since $K[X,Y]$ contains $K[X,Y]^{\Delta}=\pi(K[X,U])\cong K[X,U]/J$, the statement (3)
implies that the images of normal monomials under $\pi$ are linearly independent in $K[X,U]/J$
and so, as we mentioned above, $R\cup S$ is a Gr\"obner basis of $J=\text{Ker}(\pi)$.
We also check that this basis is reduced.

{\bf Step 1:}
It is easy to verify directly that $\pi(s(i, j, k)) = \pi(r(i, j, k, l)) = 0$.  Also
\[
\pi(s(i, j, k)) = \det \left(
          \begin{array}{ccc}
          x_i & x_j & x_k \\
          x_i & x_j & x_k \\
          y_i & y_j & y_k  \\
          \end{array}
          \right)
\]
expended relative to the first row and
\[
2\pi(r(i, j, k, l)) = \det \left(
\begin{array}{cccc}
 x_i & x_j & x_k & x_l \\
 y_i & y_j & y_k & y_l \\
 x_i & x_j & x_k & x_l \\
 y_i & y_j & y_k & y_l \\
 \end{array}
 \right)
\]
expended relative to the first two rows.

{\bf Step 2:} If $1\leq i<j<k<l\leq n$ then $\overline{r(i,j,k,l)}=u_{ik}u_{jl}$ since
$\vert u_{ik}u_{jl}\vert=\vert u_{il}u_{jk}\vert > \vert u_{ij}u_{kl}\vert$
and $(i,j,k,l)\succ (i,j,l,k)$. Similarly, if $1\leq i<j<k\leq n$ then
$\overline{s(i,j,k)}=x_ju_{ik}$ since $\vert u_{ik}\vert > \vert u_{jk}\vert$
and $\vert u_{ik}\vert > \vert u_{ij}\vert$. So different leading monomials do not divide
each other and the set $R \cup S$ is reduced.

Let
\[
v=x_{i_1}\cdots x_{i_c}u_{j_1k_1}\cdots u_{j_dk_d}
\]
be a normal monomial with respect to $R\cup S$.
If two variables $u_{ik}$ and $u_{jl}$ intersect each other, i.e., either
$1\leq i<j<k<l\leq n$ or $1\leq j<i<l<k\leq n$, then their product $u_{ik}u_{jl}$
is the leading monomial of $r(i,j,k,l)$ or $r(k,l,i,j)$ accordingly.
Hence in the normal monomial $v$ the participating generators $u_{j_pk_p}$
and $u_{j_qk_q}$ do not intersect each other.
Again, if
$u_{ik}$ covers $x_j$, i.e., $i<j<k$, then $x_ju_{ik}$ is the leading monomial of
$s(i,j,k)$. Hence $u_{j_pk_p}$ does not
cover $x_{i_t}$ in the normal monomial $v$ for any $p,q,t$.

{\bf Step 3:}
We want to show that the images of the normal monomials
under $\pi$ are linearly independent in $K[X,Y]$.

Introduce the following
ordering on the monomials of $K[X,Y]$:
\[
x_1^{a_1}y_1^{b_1}\cdots x_n^{a_n}y_n^{b_n}
>x_1^{a'_1}y_1^{b'_1}\cdots x_n^{a'_n}y_n^{b'_n},
\]
if $(a_1,b_1,\ldots,a_n,b_n)>(a'_1,b'_1,\ldots,a'_n,b'_n)$
lexicographically, i.e., either $a_1 > a'_1$ or $a_1=a'_1,b_1=b_1',\ldots,b_k=b'_k$,
and either $a_{k+1}>a'_{k+1}$
or $a_{k+1}=a'_{k+1}$, $b_{k+1}>b'_{k+1}$ for some $k$.
The leading monomial of $\pi(u_{jk})=x_jy_k-x_ky_j$, $j<k$, in $K[X,Y]$ is
$\text{lead}(u_{jk})= x_j y_k$ since $x_j^1y_j^0x_k^0y_k^1 > x_j^0y_j^1x_k^1y_k^0$
and the leading monomial of
$w=x_{i_1}\cdots x_{i_c}u_{j_1k_1}\cdots u_{j_dk_d}$ is
\[
\text{lead}(w)=x_{i_1}\cdots x_{i_c}x_{j_1}y_{k_1}\cdots x_{j_d}y_{k_d},
\]
so $\text{deg}_Y(\pi(w)) = \text{deg}_U(w)$.

We shall prove by induction on $\text{deg}_Y(\pi(w))$
that a normal monomials $w\in K[X,U]$ is uniquely determined by its leading
monomial $\text{lead}(w)$.

If $\text{deg}_Y(\pi(w)) = 0$ then $\text{deg}_U(w) = 0$ and $\text{lead}(w) = w$. If $\text{deg}_Y(\pi(w)) > 0$ let
\[
\text{lead}(w)=x_1^{a_1}\cdots x_k^{a_k}y_k^{b_k}\cdots x_n^{a_n}y_n^{b_n}
\]
where $k$ is the smallest number for which $b_k \neq 0$. Hence $w$ contains $u_{ik}$ for some $i < k$.
Choose the largest number $j$ for which $w$ contains $u_{jk}$. Then $j < k$ and must be the largest among
these numbers with $a_j \neq 0$.

Indeed, $w$ is a normal monomial. Assume that exists an $l$ for which $a_l \neq 0$ and $j < l < k$.
Then $w$ cannot contain $x_l$ since $x_l$ is covered by $u_{jk}$. But it also cannot contain $u_{lm}$
since $m \geq k$ by the choice of $k$  and if $m = k$ then $w$ contains $u_{lk}$ contrary to the choice of $j$;
so $m > k$ and $u_{jk}$ and $u_{lm}$ would intersect each other.

Therefore $w = u_{jk}w_1$ where $u_{jk}$ is uniquely determined by $w$.
By induction $\text{lead}(w_1)$ determines $w_1$ and so $w$ is determined by $\text{lead}(w)$.
This, of course, implies that $\text{lead}(w)$ are different for different normal monomials
and hence the images $\pi(w)$ of these monomials are linearly independent.

\end{proof}

\section*{Acknowledgements}

The first author is grateful to the Department of Mathematics of
the Wayne State University in Detroit
for the warm hospitality during his
visit when most of this work was carried out.

\end{document}